\documentclass[11pt]{article}

\usepackage[margin=1in]{geometry}
\usepackage[T1]{fontenc}
\usepackage[utf8]{inputenc}
\usepackage{amsmath,amssymb,amsthm,mathtools}
\usepackage{graphicx}
\usepackage{listings}
\usepackage{hyperref}

\ifdefined\pdfgentounicode
  \input glyphtounicode
\fi

\hypersetup{
  colorlinks=true,
  linkcolor=black,
  citecolor=black,
  urlcolor=black
}

\newtheorem{theorem}{Theorem}[section]

\newtheorem{lemma}[theorem]{Lemma}
\newtheorem{corollary}[theorem]{Corollary}

\theoremstyle{definition}
\newtheorem{definition}[theorem]{Definition}
\newtheorem{remark}[theorem]{Remark}

\newcommand{\ZZ}{\mathbb{Z}}
\newcommand{\QQ}{\mathbb{Q}}
\newcommand{\RR}{\mathbb{R}}
\newcommand{\SO}{\operatorname{SO}}

\title{A proper Euler magic matrix of order $5$}

\author{Scott Duke Kominers%
\thanks{%
\textsc{Harvard Business School; Department of Economics and Center of
Mathematical Sciences and Applications, Harvard University; and a16z crypto.}
I used LLMs to assist with computations and coding in the preparation of this
article, especially Claude~Opus~4.8 and GPT-5.5~Pro (both accessed in part via
Poe with the support of Quora, where I am an advisor). In particular,
Claude~Opus~4.8 was used to identify the Givens rotation $G(t)$ and solution
values $(x,y)$ applied in Section~\ref{sec:construction}, as well as to produce
the arithmetic verification script presented in Appendix~\ref{app:code}. The
problem, methods, and eventual written form are my own; and of course any
errors remain my responsibility. This work was conducted while I was visiting
the Technological Innovation, Entrepreneurship, and Strategic Management (TIES)
Group at the MIT Sloan School of Management; I greatly appreciate their
hospitality.}}

\date{July 18, 2026}

\begin{document}

\maketitle

\begin{abstract}
An \emph{Euler magic matrix} is an integer matrix $M$ with $MM^{t}=\gamma I$
whose squared entries sum to $\gamma$ along both main diagonals; it is
\emph{proper} if its squared entries are pairwise distinct. Euler constructed an
order-$4$ proper example, and M\"{u}ller settled orders $3$ (none exist) and
$8$, leaving order $5$ as the smallest open case. We construct such a matrix, by
rotating one of M\"{u}ller's ``near-misses'' under a mirror-symmetric coordinate
pair so that the two diagonal conditions collapse to a single rational equation;
the same invariant suggests a uniform approach to the odd orders.
\end{abstract}

\begin{center}
\small
\textit{2020 Mathematics Subject Classification.} Primary 11C20; Secondary
15B36, 05B20.\\
\textit{Key words and phrases.} Euler magic matrix, magic square of squares,
rational orthogonal matrix.
\end{center}

\section{Introduction}

A \emph{magic square of squares} is a square matrix of distinct perfect squares
whose rows, columns, and two main diagonals share a common sum. Whether one
exists of order $3$ is a famous old problem, usually credited to Lucas (1876),
revived by LaBar~\cite{LaBar} in 1984, and popularized with a \$100 prize by
Gardner~\cite{Gardner}; it is Problem~D15 of Guy~\cite{Guy}. Despite intensive
computation and several celebrated near-misses~\cite{Sallows}, no order-$3$
example is known, and geometric heuristics suggest that examples, if any, are
rare~\cite{VarillyAlvarado}. By contrast, magic squares of squares of every
order $n\geq 4$ exist, by a circle-method theorem of Rome and
Yamagishi~\cite{RomeYamagishi}. Explicit small examples and a survey are given
by Boyer~\cite{Boyer1,Boyer2} and Bremner~\cite{Bremner1,Bremner2}; see also
Robertson~\cite{Robertson}.

Euler introduced a powerful way to produce candidates for such squares: If $M$
is an integer matrix with $MM^{t}=\gamma I$ and $\gamma\neq 0$, then $M$ is
invertible, so $M^{t}M=\gamma I$ as well, and the matrix of entrywise squares
$S=(m_{ij}^{2})$ has every row and column sum equal to $\gamma$; only the two
diagonal sums and the distinctness of the entry-squares remain. Euler treated
the order-$4$ case in work presented in 1770~\cite{Euler}, obtaining what may
be interpreted in modern terms as a rationally parametrized family of
$4\times 4$ orthogonal matrices. Euler's construction can be recast using
quaternion multiplication~\cite{Hurwitz,OswaldSteuding}; the analogous
construction at order $8$ uses octonions~\cite{Pirsic1,Pirsic2,Muller}.

Following M\"{u}ller~\cite{Muller}, we make the underlying object precise as
follows.

\begin{definition}\label{def:emm}
Let $R$ be a subring of $\RR$ (in this paper, typically $R=\ZZ$; occasionally,
$R=\QQ$) and let $n\geq 1$. A matrix $M=(m_{ij})\in R^{n\times n}$ is an
\emph{Euler magic matrix of order $n$ over $R$} if, for some
$\gamma\in R\setminus\{0\}$,
\begin{align}
  \textstyle MM^{t} &= \gamma I_n, \label{eq:orth}\\
  \textstyle \sum_{k=1}^{n} m_{kk}^{2} &= \gamma, \label{eq:diag}\\
  \textstyle \sum_{k=1}^{n} m_{k,\,n+1-k}^{2} &= \gamma; \label{eq:anti}
\end{align}
it is \emph{proper} if the $n^{2}$ absolute values $|m_{ij}|$ are pairwise
distinct, equivalently, if the squares $m_{ij}^{2}$ are all distinct. When
$R=\ZZ$, the entrywise square of a proper Euler magic matrix is a magic square
of squares.
\end{definition}

We work over $\ZZ$ throughout. This is equivalent to working over $\QQ$: if
$M\in \QQ^{n\times n}$ is Euler magic and $d$ clears denominators, then
$dM\in \ZZ^{n\times n}$ is Euler magic with constant $d^{2}\gamma$, and
properness is unchanged.

M\"{u}ller~\cite{Muller} recently proved that no Euler magic matrix of order $3$
exists over $\QQ$ because the two diagonal conditions would force
$\sqrt{3}\in\QQ$. He also constructed a proper Euler magic matrix of order $8$
by octonionic multiplication, realizing a proposal of Pirsic~\cite{Pirsic1,Pirsic2},
and showed that Euler magic matrices exist in every order $n\neq 3$, though the
ones he exhibits are far from proper. He left the order-$5$ proper case
explicitly open, exhibiting Euler magic matrices of order $5$ whose
entry-squares take $24$ of the possible $25$ distinct values---one short of
properness---but obtained neither a proper example nor a proof of impossibility.

The distinction between the two problems is essential here. A magic square of
squares need not arise from an orthogonal matrix, so the theorem of Rome and
Yamagishi~\cite{RomeYamagishi} does not directly produce proper Euler magic
matrices. Meanwhile, as orders $1$ and $2$ are essentially automatic,\footnote{%
Order $1$ matrices are trivially proper, while order $2$ admits no proper
example, since for
$M=\bigl(\begin{smallmatrix}a&b\\ c&d\end{smallmatrix}\bigr)$ the conditions
force $a^{2}=c^{2}$ and $b^{2}=d^{2}$, leaving at most two distinct absolute
values.}
order~$5$ was the smallest order for which the existence of proper Euler magic
matrices was unknown. Our main result resolves the question: a proper Euler
magic matrix of order~$5$ exists; we construct one explicitly in
Section~\ref{sec:construction}.

The proof is elementary. Section~\ref{sec:reduction} recalls M\"{u}ller's
reduction, valid in every odd order, of the problem to rational points of a
variety $V_n\subset \SO(n)$. Section~\ref{sec:mirror} isolates the mechanism
behind our construction, namely that rotations in a mirror-symmetric coordinate
pair preserve the sum of the two diagonal conditions, collapsing them to a
single equation. Section~\ref{sec:construction} carries out the construction,
and Section~\ref{sec:remarks} discusses larger odd orders. All displayed
matrices and identities are verified by exact arithmetic (see Appendix).

\section{The reduction for odd order}\label{sec:reduction}

Let $D(M)=\sum_{i} m_{ii}^{2}$ and $A(M)=\sum_{i} m_{i,\,n+1-i}^{2}$
denote the diagonal and anti-diagonal sums of squares, so that
\eqref{eq:diag}--\eqref{eq:anti} read as $D(M)=A(M)=\gamma$. The following
reduction is due to M\"{u}ller~\cite[Lemma~2.1]{Muller}; we include the short
proof for completeness.

\begin{lemma}[M\"{u}ller]\label{lem:reduction}
Let $n$ be odd and let $M\in \ZZ^{n\times n}$ satisfy $MM^{t}=\gamma I_n$ with
$\gamma\neq 0$. Then $\gamma$ is a perfect square, $\gamma=c^{2}$ with
$c\in\ZZ_{>0}$, and $Q:=M/c$ is a rational orthogonal matrix. After replacing
one row of $M$ by its negative if necessary, $Q\in \SO(n,\QQ)$. Moreover, $M$
satisfies \eqref{eq:diag}--\eqref{eq:anti} if and only if
\[
  D(Q)=1
	\qquad\text{and}\qquad
	A(Q)=1;
\]
and $M$ is proper if and only if the $n^{2}$ absolute values $|Q_{ij}|$ are
distinct.
\end{lemma}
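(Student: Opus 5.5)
The plan is to take determinants first, then rescale and check that each condition passes unchanged to $Q$. From $MM^{t}=\gamma I_n$ we get $\det(M)^{2}=\gamma^{n}$. Since $n$ is odd, write $n=2k+1$, so that $\gamma=\det(M)^{2}/\gamma^{2k}=(\det M/\gamma^{k})^{2}$. Thus $\gamma$ is the square of a rational number. Since $\gamma\in\ZZ$, it is the square of an integer: a rational whose square is an integer is itself an integer, by unique factorization or the rational root theorem applied to $x^{2}-\gamma$.

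This also gives $\gamma>0$, and we set $c=\sqrt{\gamma}\in\ZZ_{>0}$. (Alternatively, $\gamma>0$ follows directly, because $\gamma$ is the sum of squares of a nonzero row.) Then $Q=M/c\in\QQ^{n\times n}$ satisfies $QQ^{t}=MM^{t}/c^{2}=I_n$, so $Q$ is rational orthogonal and $\det Q=\pm1$.

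If $\det Q=-1$, we negate one row of $M$. This changes neither $MM^{t}$ (the diagonal entries are unchanged, and the off-diagonal ones are still $0$ after a sign flip), nor any entry-square, nor $D$, $A$, or properness. It flips the determinant, so afterwards $Q\in\SO(n,\QQ)$.

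Finally, $D(M)=c^{2}D(Q)$ and $A(M)=c^{2}A(Q)$ because each is a sum of squared entries. Hence $D(M)=\gamma$ if and only if $D(Q)=1$, and likewise for $A$. Properness transfers because $|m_{ij}|=c|Q_{ij}|$ with $c>0$, so distinctness of absolute values is preserved.

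The only step with any content is the squareness of $\gamma$, and the determinant parity argument settles it. Everything else is bookkeeping.
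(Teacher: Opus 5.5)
Your proof is correct and follows essentially the paper's route: take determinants to get $(\det M)^{2}=\gamma^{n}$ with $n$ odd, conclude $\gamma=c^{2}$, rescale to $Q=M/c$, fix the sign of $\det Q$ by negating one row, and note that $D$, $A$ and the entry-squares scale by $c^{2}$. The only difference is cosmetic: you write $\gamma=(\det M/\gamma^{k})^{2}$ and use that a rational square root of an integer is an integer, which also gives $\gamma>0$, whereas the paper gets $\gamma>0$ from a row sum and then argues that every prime divides $\gamma$ to an even power.
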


\begin{proof}
First, $\gamma>0$, as it equals the sum of squares of any row of $M$, and every
row is nonzero since $MM^{t}=\gamma I_n\neq 0$. Taking determinants gives
$(\det M)^{2}=\gamma^{n}$; as $n$ is odd, each prime divides $\gamma$ to an even
power, so $\gamma=c^{2}$ for some $c\in\ZZ_{>0}$, and $Q=M/c$ satisfies
$QQ^{t}=I_n$. Replacing one row of $M$ by its negative preserves
\eqref{eq:orth}--\eqref{eq:anti} and flips the sign of $\det M$, so the
resulting $Q=M/c$ lies in $\SO(n,\QQ)$. Dividing \eqref{eq:diag} and
\eqref{eq:anti} by $\gamma=c^{2}$ gives $D(Q)=A(Q)=1$, and the equalities
$m_{ij}^{2}=m_{kl}^{2}$ are unaffected by the scaling.
\end{proof}

We write
\[
  V_n=\{Q\in \SO(n): D(Q)=A(Q)=1\}
\]
for the $\QQ$-subvariety of $\SO(n)$ cut out by the quadratic conditions $D=1$
and $A=1$. By Lemma~\ref{lem:reduction}, proper Euler magic matrices of odd
order $n$ correspond, up to a scaling and a row sign change, to the rational
points of $V_n$ lying off the closed subset
\[
  \bigcup_{(i,j)\neq (k,l)} \{Q_{ij}^{2}=Q_{kl}^{2}\}.
\]
In order $3$, the $\sqrt{3}$ obstruction of~\cite{Muller} shows that
$V_3(\QQ)$ is empty; for $n=5$ we exhibit a rational point outside the
coincidence locus.

\section{Mirror rotations and a single-equation reduction}\label{sec:mirror}

Call a pair $\{a,a^{*}\}$ with $a^{*}=n+1-a$ and $a\neq a^{*}$ a
\emph{mirror pair}; it is the orbit of $a$ under the reflection
$i\mapsto n+1-i$ that fixes the anti-diagonal. A \emph{Givens rotation supported
on $\{a,a^{*}\}$} acts as the identity outside rows and columns $a,a^{*}$, where
it is the planar rotation with block
$\begin{psmallmatrix}c&-s\\ s&c\end{psmallmatrix}$, $c^{2}+s^{2}=1$
(see~\cite{Givens}).

\begin{theorem}\label{thm:mirror}
For every $M\in \RR^{n\times n}$, the functional $D+A$ is invariant under left-
or right-multiplication of $M$ by any Givens rotation supported on a mirror pair.
\end{theorem}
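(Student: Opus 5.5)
Let $G$ be a Givens rotation supported on the mirror pair $\{a,a^*\}$. It suffices to treat left multiplication $M\mapsto GM$. For right multiplication, note that $D(M^t)=D(M)$. Also $A(M^t)=A(M)$, because the anti-diagonal entries of $M^t$ are $m_{n+1-i,i}$, which are the anti-diagonal entries of $M$ reindexed by $i\mapsto n+1-i$. Hence $MG=(G^tM^t)^t$, and $G^t$ is again a Givens rotation on the same pair (replace $s$ by $-s$). So the right-multiplication case reduces to the left one.

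For $M\mapsto GM$, only rows $a$ and $a^*$ change. The terms of $D+A$ coming from other rows are untouched. The relevant terms from rows $a$ and $a^*$ are
\[
  m_{aa}^2+m_{a,a^*}^2+m_{a^*a^*}^2+m_{a^*,a}^2 ,
\]
since the diagonal entry of row $a$ is $m_{aa}$ and its anti-diagonal entry is $m_{a,n+1-a}=m_{a,a^*}$, and symmetrically for row $a^*$. If $n$ is odd and some index equals its mirror, it is not in $\{a,a^*\}$ because $a\neq a^*$, so no entry is double-counted incorrectly. The four terms are exactly the squared entries of the $2\times2$ submatrix of $M$ on rows and columns $\{a,a^*\}$. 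Equivalently, they are the sums of squares of columns $a$ and $a^*$ restricted to rows $\{a,a^*\}$.

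The new rows are $c\,r_a - s\,r_{a^*}$ and $s\,r_a + c\,r_{a^*}$. In any fixed column $j$, the pair $(m_{aj},m_{a^*j})$ is rotated by the planar rotation, so $m_{aj}^2+m_{a^*j}^2$ is preserved because $c^2+s^2=1$. Applying this with $j=a$ and $j=a^*$ shows that the displayed four-term sum is invariant. Hence $D+A$ is invariant.

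I do not expect a real obstacle here. The only care needed is the bookkeeping that identifies the contributions of rows $a$ and $a^*$ to $D$ and to $A$ as precisely columns $a$ and $a^*$. Individually, $D$ and $A$ are not preserved, which is why the statement concerns only their sum.
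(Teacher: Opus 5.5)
Your proof is correct and is essentially the paper's argument: you reduce the right case to the left via transpose-invariance of $D$ and $A$, then observe that the four affected entries $m_{aa},m_{a,a^*},m_{a^*a},m_{a^*a^*}$ form columns $a$ and $a^*$ of the $2\times 2$ block on rows $\{a,a^*\}$, each rotated in the plane. The paper expands $(cu-sw)^2+(su+cw)^2+(sv+cz)^2+(cv-sz)^2$ and cancels cross terms, which is your column-norm preservation written out explicitly.
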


\begin{proof}
It suffices to treat left multiplication, as $D$ and $A$ are
transpose-invariant, and if $G$ denotes the rotation then
$(MG)^{t}=G^{t}M^{t}$ reduces the right case to the left. Multiplying on the
left by a Givens rotation on rows $a,a^{*}$ changes only those two rows; the
four entries it mixes that contribute to $D+A$ are the diagonal entries
$u:=M_{aa}$ and $z:=M_{a^{*}a^{*}}$, and the anti-diagonal entries
$v:=M_{a,a^{*}}$ and $w:=M_{a^{*}a}$ (recall that $a^{*}=n+1-a$). After
rotation, the diagonal contribution is $(cu-sw)^{2}+(sv+cz)^{2}$ and the
anti-diagonal contribution is $(cv-sz)^{2}+(su+cw)^{2}$; their sum is
\[
  (cu-sw)^{2}+(su+cw)^{2}+(sv+cz)^{2}+(cv-sz)^{2}
  = u^{2}+w^{2}+v^{2}+z^{2},
\]
as the cross-terms cancel and $c^{2}+s^{2}=1$. The claim then follows since
$u^{2}+w^{2}+v^{2}+z^{2}$ equals exactly the pre-rotation contribution of
 $u$, $w$, $v$, and $z$ to $D+A$, and all other entries are unchanged.
\end{proof}

Note that in Theorem~\ref{thm:mirror} the mirror-pair-support hypothesis is
essential: a rotation supported on a non--mirror pair changes $D+A$ in general.
For instance, left-multiplying $I_5$ by a rotation in rows $1,2$ (not a mirror
pair, as $1^{*}=5$) sends $D+A=6$ to $4+2c^{2}$, which equals $6$ only when
$s=0$, i.e., when the rotation does not mix the two rows.

\begin{corollary}\label{cor:single}
Let $M$ be an Euler magic matrix of order $n$ with constant $\gamma$, and let
$N$ be obtained from $M$ by any product of mirror-pair Givens rotations applied
on the left and/or the right. Then $NN^{t}=\gamma I_n$ and
$D(N)+A(N)=2\gamma$; consequently, $N$ is Euler magic with constant $\gamma$ if
and only if $D(N)=A(N)$.
\end{corollary}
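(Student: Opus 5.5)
The plan is to check the two claims separately, orthogonality first and then the diagonal sum, and finish with a one-line equivalence. Write $N = G_1\cdots G_r\,M\,H_1\cdots H_s$, where each $G_i$ and $H_j$ is a Givens rotation supported on a mirror pair. Each such rotation is a real orthogonal matrix: it agrees with the identity outside rows and columns $a,a^{*}$, and its planar block $\begin{psmallmatrix}c&-s\\ s&c\end{psmallmatrix}$ satisfies $c^{2}+s^{2}=1$. So $G:=G_1\cdots G_r$ and $H:=H_1\cdots H_s$ are orthogonal. Then
\[
  NN^{t}=G M H H^{t} M^{t} G^{t}=G\,(MM^{t})\,G^{t}=\gamma\, GG^{t}=\gamma I_n .
\]
This uses only \eqref{eq:orth} for $M$, so $N$ satisfies \eqref{eq:orth} with the same constant $\gamma$.

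For the diagonal sums, induct on the number $r+s$ of rotations applied. Theorem~\ref{thm:mirror} says that a single left or right multiplication by a mirror-pair Givens rotation leaves $D+A$ unchanged, for an arbitrary real matrix. Applying it one rotation at a time along the product gives $D(N)+A(N)=D(M)+A(M)$. Since $M$ is Euler magic, \eqref{eq:diag} and \eqref{eq:anti} give $D(M)=A(M)=\gamma$, hence $D(N)+A(N)=2\gamma$.

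For the final equivalence: $N$ is Euler magic with constant $\gamma$ exactly when $NN^{t}=\gamma I_n$ and $D(N)=A(N)=\gamma$. The first condition is already established. Given $D(N)+A(N)=2\gamma$, the condition $D(N)=A(N)$ is equivalent to $D(N)=A(N)=\gamma$.

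The only step needing a little care is the ring in which $N$ lives. Definition~\ref{def:emm} asks for entries in a subring $R$, so the statement should be read with $N$ having entries in the ring under consideration. For example, $c,s\in\QQ$ gives $N\in\QQ^{n\times n}$, which can then be scaled to $\ZZ$ as the paper explains. I do not expect any real obstacle; the argument is bookkeeping on top of Theorem~\ref{thm:mirror}.
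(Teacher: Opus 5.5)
Your proposal is correct and follows the same route as the paper. Orthogonality of the rotations gives $NN^{t}=\gamma I_n$, and Theorem~\ref{thm:mirror}, applied one rotation at a time, gives $D(N)+A(N)=D(M)+A(M)=2\gamma$, from which the equivalence is immediate. Your explicit computation of $NN^{t}$ and your remark about which ring contains the entries of $N$ are harmless bookkeeping that the paper leaves implicit.
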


\begin{proof}
Each rotation is orthogonal, so $NN^{t}=\gamma I_n$ holds throughout. By
Theorem~\ref{thm:mirror},
\[
  D(N)+A(N)=D(M)+A(M)=2\gamma.
\]
Given $D(N)+A(N)=2\gamma$, the two conditions $D(N)=\gamma$ and
$A(N)=\gamma$ hold if and only if $D(N)=A(N)$.
\end{proof}

\begin{remark}\label{rem:precedent}
M\"{u}ller~\cite[\S3]{Muller} already works with the sum and difference of the
two diagonal conditions \eqref{eq:diag}--\eqref{eq:anti}, noting that they are
better behaved than the conditions individually. Theorem~\ref{thm:mirror}
explains this for the sum: it is invariant under an entire group of rotations,
which is what collapses the pair of conditions to a single condition in the
sequel.
\end{remark}

\section{The construction}\label{sec:construction}

We begin from an Euler magic matrix observed by M\"{u}ller~\cite[\S4]{Muller}:
\[
  M_0=
  \begin{pmatrix}
    -106 & -32 & -8  & -75 & -50\\
    -4   & -38 & -120& 58  & -35\\
    24   & \mathbf{20} & -73 & -88 & 80\\
    61   & 66  & -16 & -46 & -100\\
    70   & -115& \mathbf{20} & -40 & -18
  \end{pmatrix},
  \qquad
  \gamma_0=143^{2}=20449,
\]
whose entry-squares take exactly $24$ distinct values---a near-miss that fails
to be proper by a single coincidence (indicated in \textbf{bold}).

For a rational parameter $t$, let $G(t)$ be the Givens rotation supported on the
mirror pair $\{2,4\}$, equal to the identity except for
\[
  \begin{pmatrix}
    G(t)_{22} & G(t)_{24}\\
    G(t)_{42} & G(t)_{44}
  \end{pmatrix}
  =
  \begin{pmatrix}
    \dfrac{1-t^{2}}{1+t^{2}} & -\dfrac{2t}{1+t^{2}}\\[8pt]
    \dfrac{2t}{1+t^{2}} & \dfrac{1-t^{2}}{1+t^{2}}
  \end{pmatrix},
\]
and set $N(x,y)=G(x)M_0G(y)$.\footnote{%
This rotation fixes the nine entries whose row and column both lie in
$\{1,3,5\}$, so it resolves the seed's sole coincidence---the entries equal to
$20$ at $(3,2)$ and $(5,3)$---through the mobile entry at $(3,2)$;
Section~\ref{sec:remarks} describes the reach of a mirror pair in general.}
At the two values used below, the rotations are, on the $\{2,4\}$ block,
\[
  G\!\left(\frac{755}{547}\right)
  =
  \frac{1}{434617}
  \begin{pmatrix}
    -135408 & -412985\\
    412985  & -135408
  \end{pmatrix},
  \qquad
  G\!\left(\frac{671}{631}\right)
  =
  \frac{1}{424201}
  \begin{pmatrix}
    -26040 & -423401\\
    423401 & -26040
  \end{pmatrix},
\]
so $d=434617\cdot 424201=184364966017$ clears all denominators.

\begin{theorem}\label{thm:size5}
The integer matrix $M_5=d\,N(755/547,\,671/631)$, shown in full as
\[
\resizebox{\textwidth}{!}{$
M_5=
\begin{pmatrix}
-19542686397802 & -13439137777515 & -1474919728136  & 6737359718344   & -9218248300850\\
-10456746813053 & 5294014476286   & 9695843480720   & 9071853210066   & 19529272313780\\
4424759184408   & -16419868506296 & -13458642519241 & -2684411900500  & 14749197281360\\
-4204607349428  & 13420420317418  & -20103594654072 & 9642581583638   & -387581848675\\
12905547621190  & -6059186828480  & 3687299320340   & 21614683395155  & -3318569388306
\end{pmatrix}$},
\]
is a proper Euler magic matrix of order $5$, with
\[
  \gamma=(143d)^{2}=26364190140431^{2}=695070521760799151500865761
\]
and entry $\gcd$ equal to $1$.
\end{theorem}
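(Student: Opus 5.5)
The plan is to verify the four defining properties of Definition~\ref{def:emm} for $M_5$ in turn, using Corollary~\ref{cor:single} so that the two diagonal conditions reduce to one.

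First, orthogonality. $G(x)$ and $G(y)$ are orthogonal for every rational $t$, because $\bigl(\tfrac{1-t^2}{1+t^2}\bigr)^2+\bigl(\tfrac{2t}{1+t^2}\bigr)^2=1$. Hence $N=G(x)M_0G(y)$ satisfies $NN^t=\gamma_0 I_5$. I would directly check that $M_0M_0^t=143^2 I_5$ and that $D(M_0)=A(M_0)=143^2$, so $M_0$ is Euler magic. Scaling by $d$ then gives $M_5M_5^t=(143d)^2I_5$.

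I would also confirm that $M_5$ is integral. The denominators of $G(755/547)$ and $G(671/631)$ are $434617$ and $424201$. They appear only in rows $2,4$ from the left factor and in columns $2,4$ from the right factor. So $d$ times each entry is an integer, and the displayed matrix is exactly $d\,G(x)M_0G(y)$. This is a finite exact computation; I would cite the script in the Appendix rather than reproduce it.

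Second, the diagonal conditions. By Corollary~\ref{cor:single}, since $\{2,4\}$ is a mirror pair in order $5$, we already have $D(N)+A(N)=2\gamma_0$. It therefore suffices to check the single equation $D(M_5)=A(M_5)$. Only the entries at $(2,2),(4,4),(2,4),(4,2)$ move, together with rows and columns $2,4$ elsewhere; the other diagonal contributions, for example $(1,1)$ and $(3,3)$, may also change. So I would simply compute
\[
D(M_5)=\sum_{k} (M_5)_{kk}^2 \quad\text{and}\quad A(M_5)=\sum_k (M_5)_{k,6-k}^2
\]
from the displayed matrix and check that each equals $26364190140431^2$. Conceptually, $D(N)-A(N)$ is a rational function of $(x,y)$, and $(755/547,671/631)$ is a rational zero of it. Its existence is what needed search, but for the proof only verification is required.

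Third, properness and primitivity. Checking properness means listing the $25$ absolute values and confirming they are pairwise distinct, for instance by sorting them. The entry gcd being $1$ is a Euclidean computation. This is routine but must be done exactly.

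The main obstacle is not conceptual but evidential: a reader cannot verify $27$-digit sums and $25$ distinct $14$-digit magnitudes by hand. I would therefore structure the proof around the cheap structural facts (orthogonality of the Givens factors, the mirror-pair invariance from Theorem~\ref{thm:mirror}, and $M_0$ being Euler magic). The remaining numerical content would then reduce to three machine checks: integrality of $dN$, the one equation $D=A$, and distinctness together with the gcd. All three would be delegated to exact rational arithmetic in the Appendix.
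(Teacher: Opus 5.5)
Your proposal is correct, and its skeleton matches the paper's. Both establish orthogonality of $N$ from the Givens factors, use Corollary~\ref{cor:single} to collapse the two diagonal conditions into $D=A$, and rely on exact machine checks for integrality of $dN$, properness, and the gcd.

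The one real difference is how you certify $D=A$ at the chosen point. You evaluate $D(M_5)$ and $A(M_5)$ directly from the displayed integers. The paper instead computes $D(N(x,y))-A(N(x,y))$ symbolically as $-256F_1F_2/((1+x^2)^2(1+y^2)^2)$. It then checks by hand, with small integers, that $(755/547,671/631)$ lies on the curve $F_2=0$. The paper's route explains where the point comes from (a height search on an explicit curve), places it on a one-parameter family of Euler magic matrices, and makes the decisive equation human-checkable. Yours is shorter and avoids the two-variable polynomial identity, but leaves that step entirely to 27-digit arithmetic.

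One aside of yours is wrong. The diagonal entries $(1,1)$ and $(3,3)$ cannot change: $G(x)$ acts only on rows $2,4$ and $G(y)$ only on columns $2,4$, so every entry with row and column in $\{1,3,5\}$ is fixed. Indeed, $M_5$ has $-106d$ and $-73d$ in those positions. On the two diagonals only $(2,2),(4,4),(2,4),(4,2)$ move. That is exactly why $D-A$ depends only on the $\{2,4\}$ block and has the small closed form the paper uses. The slip is harmless for your argument only because you recompute $D$ and $A$ from the full matrix.
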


\begin{proof}
Since $G(x)$ and $G(y)$ are orthogonal,
$N(x,y)N(x,y)^{t}=\gamma_0 I_5$ for all $x,y$, and $\{2,4\}$ is a mirror pair,
so Corollary~\ref{cor:single} applies: $N(x,y)$ is Euler magic with constant
$\gamma_0$ if and only if $D(N)=A(N)$. A direct computation gives
\[
  D\bigl(N(x,y)\bigr)-A\bigl(N(x,y)\bigr)
  =
  \frac{-256\,F_1(x,y)\,F_2(x,y)}
       {(1+x^{2})^{2}(1+y^{2})^{2}},
\]
where
\[
\begin{aligned}
  F_1 &= 26x^{2}y-5xy^{2}-4xy+5x-26y,\\
  F_2 &= x^{2}y^{2}-5x^{2}y-x^{2}+26xy^{2}-26x-y^{2}+5y+1.
\end{aligned}
\]
Since the denominator is positive for real $x,y$, $N(x,y)$ is Euler magic
exactly when $F_1F_2=0$. The point
$(x_0,y_0)=(755/547,\,671/631)$ was found by a height-bounded search along the
curve $F_2=0$; that it lies on the curve can be checked by hand. Indeed,
\[
  F_2=(x^{2}+26x-1)(y^{2}-1)+5(1-x^{2})y,
\]
so, for $y\ne 0$, dividing by $y$ shows that $F_2=0$ is equivalent to
\[
  (x^{2}+26x-1)(y-y^{-1})=5(x^{2}-1);
\]
at $(x_0,y_0)$ both sides equal $26\cdot 52080/547^{2}$, using
\[
  x_0^{2}+26x_0-1=\frac{26\cdot 423401}{547^{2}},\qquad
  5(x_0^{2}-1)=\frac{26\cdot 52080}{547^{2}},\qquad
  y_0-y_0^{-1}=\frac{52080}{423401}.
\]
Hence $N(755/547,\,671/631)$ is Euler magic with constant
$\gamma_0=143^{2}$. Clearing denominators by $d$ gives the integer matrix
$M_5=d\,N(755/547,\,671/631)$ stated in Theorem~\ref{thm:size5}, Euler magic
with constant $(143d)^{2}=26364190140431^{2}$. Properness is the nonvanishing of
the $\binom{25}{2}=300$ integer differences $M_{ij}^{2}-M_{kl}^{2}$, and the
entries have $\gcd$ $1$; both are exact integer checks. All computational claims
are verified via exact arithmetic in Python; see Appendixes~\ref{app:code}
and~\ref{app:output}.
\end{proof}

\begin{remark}\label{rem:notisolated}
Since $\det M_5=(143d)^{5}>0$ (see Appendix~\ref{app:code}), the rescaled
matrix
\[
  Q=M_5/(143d)=M_5/26364190140431
\]
lies in $\SO(5,\QQ)$, so $Q\in V_5$. At $Q$, the differentials of $D$ and $A$
are linearly independent on the tangent space
$T_Q\SO(5)=\{XQ:X^{t}=-X\}$; the relevant Jacobian has rank $2$ (again, see
Appendix~\ref{app:code}). Hence $V_5$ is smooth of dimension
$\binom{5}{2}-2=8$ near $Q$, so $Q$ is not an isolated point of $V_5$;
equivalently, $M_5$ lies in a positive-dimensional real family of Euler magic
matrices with the same constant, among them the image of the rotation curve
$F_2(x,y)=0$. We neither need nor claim infinitude of rational points for the
theorem.
\end{remark}

\section{Larger odd orders}\label{sec:remarks}

The reduction of Section~\ref{sec:reduction} and the mechanism of
Section~\ref{sec:mirror} are uniform in the odd order $n$. For odd $n$, the
mirror pairs are $\{1,n\},\{2,n-1\},\dots$, with the center $(n+1)/2$ fixed,
and Corollary~\ref{cor:single} reduces the two diagonal conditions, along any
family of mirror-pair rotations applied to an Euler magic seed, to the single
equation $D=A$. Our order-$5$ example is an instance with one mirror pair and a
single near-miss seed.

The ingredient specific to a given odd $n$ is a suitable seed---an Euler magic
matrix of order $n$ whose entry-squares are nearly all distinct---together with
a rational solution of that equation. For $n=7$, for example, the mirror pairs
are $\{1,7\},\{2,6\},\{3,5\}$ with center $4$, and the same template would apply
once such a seed is in hand.

The reach of a single mirror pair is limited in a precise way. The reflection
$i\mapsto n+1-i$ fixes only the center $(n+1)/2$, and its two-element orbits are
the mirror pairs; a rotation on $\{a,a^{*}\}$ can affect only the entries in rows
or columns $a$ or $a^{*}$, and generically affects precisely those entries.
Hence the center entry is the only one immovable by every mirror pair; in any pair
of repeated positions, unless both entries are constrained by special zero patterns,
at least one position is accessible to a suitable mirror-pair rotation, chosen
according to where the entries lie. For the order-$5$ seed we used, rotation on
$\{2,4\}$ moves the entry at $(3,2)$, while $\{1,5\}$ would move the one at
$(5,3)$. What is not automatic is the arithmetic step, i.e., preserving $D=A$
while landing off the coincidence locus.

Beyond order $5$, existence of proper Euler magic matrices in odd orders remains
open, and seems potentially more difficult in general than the corresponding
existence statement for magic squares of squares, which holds for all
$n\geq 4$~\cite{RomeYamagishi}. Condition~\eqref{eq:orth} is rigid, and a magic
square of squares need not come from an orthogonal matrix. The group $\SO(n)$ is
itself rational over $\QQ$, by the Cayley parametrization~\cite{LiebeckOsborne},
so the difficulty lies entirely in the two added quadratic conditions cutting
out $V_n$. Let $C\subset V_n$ be the coincidence locus and
$U_n=V_n\setminus C$; a proper example of order $n$ is exactly a rational point
of $U_n$. Such a point exists whenever some $\QQ$-irreducible component of $V_n$
is not contained in $C$ and is unirational over $\QQ$, for then its rational
points are Zariski-dense in that component and so meet $U_n$. We leave the
existence of such a component---and, more basically, the density of
$V_n(\QQ)$---as open questions for future work.

One could also ask for a more conceptual order-$5$ construction. Orders $4$ and
$8$ are governed by the quaternions and octonions; a possible conceptual source
for order $5$ is the exceptional isomorphism
$\mathrm{Spin}(5)\cong \mathrm{Sp}(2)$, under which $\mathrm{Sp}(2)$ acts on the
five-dimensional space of trace-free quaternion-Hermitian $2\times 2$ matrices
and realizes $\SO(5)$. We do not know whether this $\mathrm{Sp}(2)$-model has
been used for Euler magic matrices or magic squares of squares; it might give a
more systematic approach to the odd orders.

Finally, the Kronecker product of two Euler magic matrices is again Euler magic:
if $M,N$ are Euler magic with constants $\gamma_M,\gamma_N$, then
\[
  (M\otimes N)(M\otimes N)^{t}=\gamma_M\gamma_N I,
\]
while
\[
  D(M\otimes N)=D(M)D(N)
  \qquad\text{and}\qquad
  A(M\otimes N)=A(M)A(N)
\]
supply the two diagonal sums. So order $5$ combines with other orders to give
Euler magic matrices of composite order---although keeping such a product proper
takes a careful choice of factors, in the spirit of product constructions for
magic squares~\cite{ZhangChenLi}.

\appendix

\section{Exact-arithmetic verification}\label{app:code}

The following self-contained script was run under Python~3 with the
\texttt{SymPy} library; all arithmetic is exact over $\ZZ$, $\QQ$,
and the relevant rational function fields. It verifies the seed $M_0$
(including $\det M_0=143^{5}$), the single-equation identity for $D-A$,
the rational point and the integral matrix $M_5$ (including properness,
the constant $\gamma$, and $\det M_5=(143d)^{5}$), the mirror-rotation
identity of Theorem~\ref{thm:mirror}, and the tangent-rank statement of
Remark~\ref{rem:notisolated}. Each test is recorded as a Boolean, and the final
line is reached only if all $15$ pass. The output of the verification script is
presented in Appendix~\ref{app:output}.

\medskip

\begin{lstlisting}[language=Python]
# Exact verification for "A proper Euler magic matrix of order five".
# Python 3 + SymPy; all arithmetic is exact over Z, Q, or Q[x,y].
# Every assertion is collected; the final line prints only if all pass.

from math import gcd
from functools import reduce
import platform
import sympy as sp

print("Python", platform.python_version())
print("SymPy ", sp.__version__)

results = []
def check(label, cond):
    cond = bool(cond)
    results.append(cond)
    print(f"  [{'PASS' if cond else 'FAIL'}] {label}")

def D(M):                       # sum of squares on the main diagonal
    return sum(M[i, i]**2 for i in range(M.rows))
def A(M):                       # sum of squares on the anti-diagonal
    n = M.rows
    return sum(M[i, n-1-i]**2 for i in range(n))
def row_sq_sums(M):
    return [sum(M[i, j]**2 for j in range(M.cols)) for i in range(M.rows)]
def distinct_sq(M):
    return len({sp.simplify(z**2) for z in M})
def is_magic(M, g):
    n = M.rows
    return (M.rows == M.cols and M*M.T == g*sp.eye(n)
            and M.T*M == g*sp.eye(n) and D(M) == g and A(M) == g)
def entry_gcd(M):
    return reduce(gcd, [abs(int(z)) for z in M])
def Givens(n, i, j, t):         # rational Givens rotation, block [[c,-s],[s,c]]
    c, s = (1 - t**2)/(1 + t**2), 2*t/(1 + t**2)
    R = sp.eye(n); R[i, i] = c; R[j, j] = c; R[i, j] = -s; R[j, i] = s
    return R

# ---- data ----
M0 = sp.Matrix([[-106, -32, -8, -75, -50],
                [-4, -38, -120, 58, -35],
                [24, 20, -73, -88, 80],
                [61, 66, -16, -46, -100],
                [70, -115, 20, -40, -18]])
gamma0 = 143**2
M5 = sp.Matrix([[-19542686397802, -13439137777515, -1474919728136, 6737359718344, -9218248300850],
                [-10456746813053, 5294014476286, 9695843480720, 9071853210066, 19529272313780],
                [4424759184408, -16419868506296, -13458642519241, -2684411900500, 14749197281360],
                [-4204607349428, 13420420317418, -20103594654072, 9642581583638, -387581848675],
                [12905547621190, -6059186828480, 3687299320340, 21614683395155, -3318569388306]])
c5 = sp.Integer(26364190140431)
gamma5 = c5**2
d = 434617*424201
x, y = sp.symbols("x y")
F1 = 26*x**2*y - 5*x*y**2 - 4*x*y + 5*x - 26*y
F2 = x**2*y**2 - 5*x**2*y - x**2 + 26*x*y**2 - 26*x - y**2 + 5*y + 1
den = (1 + x**2)**2*(1 + y**2)**2
N = Givens(5, 1, 3, x)*M0*Givens(5, 1, 3, y)
x0, y0 = sp.Rational(755, 547), sp.Rational(671, 631)
N0 = Givens(5, 1, 3, x0)*M0*Givens(5, 1, 3, y0)

# ---- seed M0 ----
print("=== seed M0  (gamma0 = 143^2 =", gamma0, ") ===")
print("  row square sums:", row_sq_sums(M0), "  distinct entry-squares:", distinct_sq(M0), "/25")
check("M0 is Euler magic with constant 143^2", is_magic(M0, gamma0))
check("M0 has exactly 24 distinct entry-squares", distinct_sq(M0) == 24)
check("det(M0) = 143^5   (so M0/143 in SO(5,Q))", M0.det() == 143**5)

# ---- single-equation reduction, mirror pair {2,4} (0-based {1,3}) ----
print("=== single-equation reduction (mirror pair {2,4}) ===")
check("D(N)+A(N) = 2*gamma0   (mirror invariance)", sp.simplify(D(N) + A(N) - 2*gamma0) == 0)
check("(D(N)-A(N))*den = -256*F1*F2", sp.simplify((D(N) - A(N))*den + 256*F1*F2) == 0)
check("F2(755/547, 671/631) = 0", F2.subs({x: x0, y: y0}) == 0)

# ---- integral matrix M5 = d * N(755/547, 671/631) ----
print("=== integral matrix M5 = d*N(755/547,671/631),  d =", d, "===")
check("d * N(755/547,671/631) = M5", d*N0 == M5)
check("(143*d)^2 = gamma5", (143*d)**2 == gamma5)
print("  gamma5 =", gamma5)
print("  M5 row square sums:", row_sq_sums(M5), "  distinct entry-squares:", distinct_sq(M5), "/25")
check("M5 is Euler magic with constant gamma5", is_magic(M5, gamma5))
check("M5 is proper (25 distinct entry-squares)", distinct_sq(M5) == 25)
check("M5 entries have gcd 1", entry_gcd(M5) == 1)
check("det(M5) = (143*d)^5   (so M5/(143d) in SO(5,Q))", M5.det() == (143*d)**5)

# ---- mirror-rotation invariance, symbolic, general 5x5 ----
print("=== mirror-rotation invariance, symbolic on a general 5x5 ===")
q = sp.symbols("q0:25"); Qsym = sp.Matrix(5, 5, q); t = sp.symbols("t"); G = Givens(5, 1, 3, t)
check("D(G*Q)+A(G*Q) = D(Q)+A(Q)   (left)",  sp.factor(D(G*Qsym) + A(G*Qsym) - D(Qsym) - A(Qsym)) == 0)
check("D(Q*G)+A(Q*G) = D(Q)+A(Q)   (right)", sp.factor(D(Qsym*G) + A(Qsym*G) - D(Qsym) - A(Qsym)) == 0)

# ---- local dimension of V_5 at Q = M5/(143d) ----
print("=== local dimension of V_5 at Q = M5/(143d) ===")
Q5 = M5/c5
tan, X = [], sp.zeros(5)
for i in range(5):
    for j in range(i + 1, 5):
        v = sp.symbols(f"u{i}{j}"); tan.append(v); X[i, j], X[j, i] = v, -v
Yt = X*Q5
linD = 2*sum(Q5[i, i]*Yt[i, i] for i in range(5))
linA = 2*sum(Q5[i, 4 - i]*Yt[i, 4 - i] for i in range(5))
Jac = sp.Matrix([[sp.diff(linD, v) for v in tan], [sp.diff(linA, v) for v in tan]])
r = Jac.rank()
print("  dim SO(5) = 10,  rank of (dD,dA) at Q =", r, ",  local dim V_5 =", 10 - r)
check("(dD,dA) has rank 2 at Q  (V_5 smooth of dim 8 near Q)", r == 2)

assert all(results), "SOME CHECKS FAILED"
print()
print(f"ALL {len(results)} CHECKS PASSED")
\end{lstlisting}

\section{Verification output}\label{app:output}

\begin{lstlisting}
Python 3.12.3
SymPy  1.14.0
=== seed M0  (gamma0 = 143^2 = 20449 ) ===
  row square sums: [20449, 20449, 20449, 20449, 20449]   distinct entry-squares: 24 /25
  [PASS] M0 is Euler magic with constant 143^2
  [PASS] M0 has exactly 24 distinct entry-squares
  [PASS] det(M0) = 143^5   (so M0/143 in SO(5,Q))
=== single-equation reduction (mirror pair {2,4}) ===
  [PASS] D(N)+A(N) = 2*gamma0   (mirror invariance)
  [PASS] (D(N)-A(N))*den = -256*F1*F2
  [PASS] F2(755/547, 671/631) = 0
=== integral matrix M5 = d*N(755/547,671/631),  d = 184364966017 ===
  [PASS] d * N(755/547,671/631) = M5
  [PASS] (143*d)^2 = gamma5
  gamma5 = 695070521760799151500865761
  M5 row square sums: [695070521760799151500865761, 695070521760799151500865761, 695070521760799151500865761, 695070521760799151500865761, 695070521760799151500865761]   distinct entry-squares: 25 /25
  [PASS] M5 is Euler magic with constant gamma5
  [PASS] M5 is proper (25 distinct entry-squares)
  [PASS] M5 entries have gcd 1
  [PASS] det(M5) = (143*d)^5   (so M5/(143d) in SO(5,Q))
=== mirror-rotation invariance, symbolic on a general 5x5 ===
  [PASS] D(G*Q)+A(G*Q) = D(Q)+A(Q)   (left)
  [PASS] D(Q*G)+A(Q*G) = D(Q)+A(Q)   (right)
=== local dimension of V_5 at Q = M5/(143d) ===
  dim SO(5) = 10,  rank of (dD,dA) at Q = 2 ,  local dim V_5 = 8
  [PASS] (dD,dA) has rank 2 at Q  (V_5 smooth of dim 8 near Q)

ALL 15 CHECKS PASSED
\end{lstlisting}

\providecommand{\bysame}{\leavevmode\hbox to3em{\hrulefill}\thinspace}
\providecommand{\MR}{\relax\ifhmode\unskip\space\fi MR }
\providecommand{\MRhref}[2]{%
  \href{http://www.ams.org/mathscinet-getitem?mr=#1}{#2}%
}
\providecommand{\href}[2]{#2}

\end{document}